\newtheorem{theorem}{Theorem}
\newtheorem{corollary}[theorem]{Corollary}
\newtheorem{definition}[theorem]{Definition}
\newtheorem{example}[theorem]{Example}
\newtheorem{lemma}[theorem]{Lemma}
\newtheorem{proposition}[theorem]{Proposition}
\newtheorem{remark}[theorem]{Remark}
\newenvironment{proof}[1][Proof]{\noindent\textbf{#1.} }{\ \rule{0.5em}{0.5em}}
\numberwithin{equation}{section}
\begin{document}

\title{On space-like generalized constant ratio hypersufaces in Minkowski spaces}

\author{Alev Kelleci\footnote{Adress:F\i rat University, Faculty of Science, Department of Mathematics,23200 Elaz\i\u g/Merkez, Turkey, Phone: (+90)424 237 0000-3708, e-mail:alevkelleci@hotmail.com}, 
Mahmut Erg\" ut\footnote{Adress:Nam\i k Kemal University, Faculty of Science and Letters, Department of Mathematics, 59030 Tekirda\u g/Merkez, Turkey, e-mail:mergut@nku.edu.tr} and Nurettin Cenk Turgay \footnote{Corresponding Author} 
\footnote{Adress:Istanbul Technical University, Faculty of Science and Letters, Department of  Mathematics, 34469 Maslak, Istanbul, Turkey, Phone: (+90)533 227 0041 Fax: (+90)212 285 6386, e-mail:turgayn@itu.edu.tr}}


\maketitle


\begin{abstract}
A hypersurface in a (semi-) Euclidean space $\mathbb{E}^{n+1}_s$ is said to be a
generalized constant ratio (GCR) hypersurface if the tangential part of its
position vector is one of its principle directions. In this work, we move
the study of generalized constant ratio hypersurfaces started in \cite%
{YuFu2014GCRS} into the Minkowski space. First, we get some geometrical
properties of non-degenerated GCR hypersurfaces in an arbitrary dimensional
Minkowski space. In this paper, we study generalized constant ratio (GCR) hypersurfaces in Euclidean
spaces. We mainly focus on the hypersurfaces in $\mathbb{E}^{4}_1$. Then, 
we obtain the complete classification of space-like GCR hypersurfaces with vanishing Gauss-Kronecker
curvature. We also give some explicit examples.

\textbf{ MSC 2010 Classification. }Primary: 53C42; Secondary: 53D12, 53B25.

\textbf{Keywords.} Generalized constant ratio hypersurfaces, Minkowski 3-space, space-like surfaces, flat surfaces
\end{abstract}



\section{Introduction}

The theory of semi-Riemannian submanifolds of semi-Euclidean spaces is
a very active research field. In particular, problems related with position vector of
submanifolds have cough interest of many geometers so far. In this
direction, the notion of constant ratio submanifolds in Euclidean spaces was
firstly introduced by B.-Y. Chen in \cite{ChenCRSurf2001}(see also \cite{Boyadzhiev2007}). By the definition,
a submanifold of Euclidean space is said to be of \textit{constant ratio} if
the ratio of the length of the tangential and normal components of its
position vector is constant. 

Let $M$ be a hypersurface in the Euclidean $\mathbb E^{n+1}$ with the position vector $x$ and $%
\theta$ denote the angle function between $x$ and the unit normal vector
field $N$ of $M$. If the tangential part $x^T$ of $x$ is one of its principal
directions, then $M$ is said to be a generalized constant ratio (GCR)
surface, \cite{YuFu2014GCRS}. One can show that being GCR of $M$ is
equivalent to $Y(\theta)=0$, whenever $Y$ is a tangent vector field
orthogonal to $x$, \cite{YuFu2014GCRS}. Note that $M$ is a CR hypersurface
if and only if it is a GCR surface satisfying $x^T(\theta)=0$.

We also would like to note that  GCR surfaces in Euclidean 3-space $\mathbb{E}^{3}$ are
also related with \textit{constant slope surfaces} introduced by M. I.
Munteanu in \cite{Munteanuconstantslope}, where the author obtain the
complete classification of such surfaces in the $\mathbb{E}^{3}$. Further,
similar techniques are used in \cite{YufuYangspacelikeslope,YufuWangtimelikeslope} in order to obtain the complete classification of
constant slope surfaces in $\mathbb{E}_{1}^{3}$. We would like to note that
an important property of constant slope surfaces in Euclidean 3-space $%
\mathbb{E}^{3}$ and Minkowski 3-spaces is the following: Let $U$ and $x$
denote the projection of position vector on the tangent plane of the surface
and a generic point in ambient space, respectively. If the projection $U$
makes constant angle with the normal vector of the surface at that point,
then $U$ is a canonical principal direction of the surface with the
corresponding principal curvature being different from zero.

Several classifications of GCR hypersurfaces  in semi-Euclidean spaces have been appeared so far. For example, GCR surfaces in the Euclidean space $\mathbb E^3$ and Minkowski space $\mathbb E^3_1$ were classified in \cite{YuFu2014GCRS} and \cite{YuFu2016LGCR}, respectively. Furthermore, in \cite{YuFu2016LGCR} authors also gave the characterizations of the flat and CMC Lorentz GCR surfaces in $\mathbb{E}_{1}^{3}$. 
On the other hand, several characterization results on GCR hypersurfaces of Euclidean spaces were obtained in \cite{TurgayGCRHypEucl}. 

In the present paper, we would like to move the study of GCR hypersurfaces
in Euclidean spaces initiated in \cite{YuFu2014GCRS,TurgayGCRHypEucl} into
semi-Euclidean spaces by obtaining the complete classification of space-like GCR hypersurfaces
in Minkowski 4-space. This paper is organized as follows. In Sect. 2, we
introduce the notation that we will use and give a brief summary of basic
definitions in theory of submanifolds of semi-Euclidean spaces. In Sect. 3,
we obtain some of geometrical properties of space-like GCR hypersurfaces in
a arbitrary dimensional Minkowski space $\mathbb{E}_{1}^{n+1}$. In Sect. 4,
we obtain the complete classification of space-like GCR hypersurfaces in the
Minkowski 4-space.

\section{Preliminaries}

Let $\mathbb{E}^m_s$ denote the pseudo-Euclidean $m$-space with the
canonical pseudo-Euclidean metric tensor $g$ of index $s$ given by 
\begin{equation*}
\widetilde g=\langle\ ,\ \rangle=-\sum\limits_{i=1}^s
dx_i^2+\sum\limits_{j=s+1}^m dx_j^2,
\end{equation*}
where $(x_1, x_2, \hdots, x_m)$ is a rectangular coordinate system in $%
\mathbb{E}^3_1$. We put 
\begin{eqnarray}
\mathbb{S}^{m-1}_{s}(r^{-2})&=&\{x\in\mathbb{E}^m_s: \langle x, x
\rangle=r^{2}\},  \notag \\
\mathbb{H}^{m-1}_{s-1}(r^{-2})&=&\{x\in\mathbb{E}^{n+1}_1: \langle x, x
\rangle=-r^{2}\}.  \notag
\end{eqnarray}
Note that $\mathbb{S}^{m-1}_{s} (r^2)$ and $\mathbb{H}^{m-1}_{s-1}(-r^2)$
are the complete pseudo-Riemannian manifolds of constant curvature $r^2$ and 
$-r^{2}$, respectively. Moreover, we will put $\mathbb{H}^{m-1}_{0}(-r^2)=%
\mathbb{H}^{m-1}(-r^2)$.

A non-zero vector $v$ in $\mathbb{E}^m_s$ is said to be space-like, time-like
and light-like (null) regarding to $\left\langle v,v\right\rangle >0$ , $%
\left\langle v,v\right\rangle <0$ and $\left\langle v,v\right\rangle =0$,
respectively. Note that $v$ is said to be causal if it is not space-like.

\subsection{ Space-like Hypersurfaces in the Minkowski space.}\label{SectHyprPrel}

Let $M$ be an oriented hypersurface in $\mathbb{E}_{1}^{n+1}$ with the position vector $x$ and unit
normal vector $N$ associated with the orientation of $M$. The immersion $x$
(or, equivalently hypersurface $M$) is said to be space-like (resp.
time-like) if the induced metric $g=\left.\widetilde g\right|_{M}$ of $M$ is
Riemannian (resp. Lorentzian). This is equivalent to being time-like (resp.
space-like) of $N$ at each point of $M$.

We denote the Levi-Civita connections of $M$ and $\mathbb{E}_{1}^{n+1}$ by $%
\nabla $ and $\widetilde{\nabla }$, respectively. Then, Gauss and Weingarten
formulas are given, respectively, by 
\begin{eqnarray}
\widetilde{\nabla }_{X}Y &=&\nabla _{X}Y+h\left( X,Y\right) ,
\label{WeingrFormula} \\
\widetilde{\nabla }_{X}N &=&-S(X)
\end{eqnarray}%
for any tangent vector fields $X,Y$, where $h$ and $S$ are the second
fundamental form and the shape operator (or Weingarten map) of $M$,
respectively. The second fundamental form and the shape operator are related
by 
\begin{equation}
\left\langle S(X),Y\right\rangle =\left\langle h\left( X,Y\right)
,N\right\rangle.  \label{hARelatedBy}
\end{equation}%

Now, let $M$ be space-like. Then, its shape operator $S$
is diagonalizable, i.e., there exists a local orthonormal frame field $%
\{e_{1},e_{2},\hdots,e_{n};N\}$ such that $Se_{i}=k_{i}e_{i},\quad i=1,2,%
\hdots,n$. In this case, the vector field $e_{i}$ and smooth function $k_{i}$
are called a principal direction and a principal curvature of $M$.

The Gauss and Codazzi equations are given, respectively, by 
\begin{eqnarray}
\langle R(X,Y)Z,W\rangle  &=&\langle h(Y,Z),h(X,W)\rangle -\langle
h(X,Z),h(Y,W)\rangle ,  \label{MinkGauss} \\
(\widetilde{\nabla }_{X}h)(Y,Z) &=&(\widetilde{\nabla }_{Y}h)(X,Z),
\label{MlinkCodazzi}
\end{eqnarray}%
where $R$ is the curvature tensor associated with the connection $\nabla $
and $\widetilde{\nabla }h$ is defined by 
\begin{equation*}
(\widetilde{\nabla }_{X}h)(Y,Z)=\nabla _{X}^{\perp }h(Y,Z)-h(\nabla
_{X}Y,Z)-h(Y,\nabla _{X}Z).
\end{equation*}


\section{Hypersurfaces in $\mathbb E^{n+1}_1$}

In this section, we consider GCR hypersurfaces in a Minkowski space $\mathbb E^{n+1}_1$.

Let $M$ be a hypersurface in a semi-Euclidean space $\mathbb{E}^{n+1}_s$ and 
$x:M\rightarrow\mathbb{\mathbb{E}}^{n+1}_s$ an isometric immersion. Since $x$
can be considered as a vector field defined on $M$, it can be expressed as 
\begin{equation}  \label{PosVectDecomp1}
x=x^T+x^\perp,
\end{equation}
where $x^T$ and $x^\perp$ denote the tangential and normal parts of $x$.
Before we proceed, we would like to recall the following definition. Note that this definition is given in \cite{YuFu2014GCRS} when the ambient space
is $\mathbb{E}^3$.

\begin{definition}
Let $M$ be a non-degenerated hypersurface in $\mathbb{E}^{n+1}_s$. $M$ is said to be a
generalized constant ratio (GCR) hypersurface if the tangential part of its
position vector is one of its principal directions.
\end{definition}

\begin{remark}
We want to note that if $M$ is a surface in $\mathbb S^2(1)\times\mathbb E$ or $\mathbb H^2(-1)\times\mathbb E$, then $U=x^T$ is called the canonical principal direction of $M$ by some geometers provided $U$ to be  an eigenvector of the shape operator $S$ of $M$, \cite{MunteanuNistor2011,Nistor2013}.
\end{remark}

\begin{remark}\label{RemarkxNorm}
If $x^T=0$ in the decomposition \eqref{PosVectDecomp1}, i.e., $x$ is normal to $M$, then we have $\langle x,x\rangle=\mbox{const}$ which yields that $M$ is an open part of either $\mathbb S^n(r^{-2})$ or $\mathbb H^n(-r^{-2})$ for some $r>0$. 
\end{remark}

Before we proceed, we would like to give the following theorem for the case of being light-like of $x^T$.
\begin{theorem}\label{TheoremxTLightlike}                                         
Let $M$ be a non-degenerated hypersurface in $\mathbb{E}^{n+1}_1$ with position vector $x$. If $M$ is GCR, then the tangential part $x^T$ of $x$ can not be light-like. 
\end{theorem}
\begin{proof}
Consider a non-degenerated hypersurface in $\mathbb{E}^{n+1}_1$ such that $x^T$ is light-like. Then, we have $M$ is time-like and
\begin{equation}  \label{PosVectDecompLightLike}
x=f_1+\langle x,x\rangle N
\end{equation}
for a light-like tangent vector field $f_1$.
                                                                                            
Towards contradiction, assume that $M$ is GCR, i.e., $f_1$ is an eigenvector of $M$. Then, we have $Sf_1=k_1f_1$ for a smooth function $k_1$. Moreover, there exists a light-like tangent vector field $f_2$ such that $\langle f_1,f_2\rangle=-1$ and $\langle Sf_1,f_2\rangle=-k_1$ which implies $h(f_1,f_2)=-k_1N$.
By applying $f_2$ to \eqref{PosVectDecompLightLike} and considering $\widetilde\nabla_{f_2}x=f_2$, we obtain
\begin{equation}  \label{PosVectDecompLightLike2}
f_2=\nabla_{f_2}f_1+h(f_1,f_2)N+f_2\left(\langle x,x\rangle\right) N-\langle x,x\rangle Sf_2.
\end{equation}
The normal part of this equation gives
$$k_1=f_2\left(\langle x,x\rangle\right)=-2.$$
However, by a further computation using \eqref{PosVectDecompLightLike2}, we get
$$ -1=  \langle x,x\rangle \langle Sf_2,f_1\rangle$$
which implies being constant of $\langle x,x\rangle$. Hence, $M$ is an open part of either $\mathbb S^n_1(r^{-2})$ or $\mathbb H^n(-r^{-2})$ for some $r>0$ which yields a contradiction.
\end{proof}

\begin{remark}
Because of Remark \ref{RemarkxNorm} and Theorem \ref{TheoremxTLightlike}, after this point, we, locally, assume that $x^T \neq0$.
\end{remark}

We also need the following lemma given in \cite{ChenTNsub}.

\begin{lemma} \label{intd}
Let $x: M\longrightarrow \mathbb E^m_{\upsilon}$ be an isometric immersion of a Riemannian n-manifold
into the pseudo-Euclidean space $\mathbb E^m_{\upsilon}$. Then, on the open subset
$U=\left\{p \in M: x^T\neq 0\right\}$ the distribution $\mathbf D=\left\{X \in T_pU: \left\langle X,x^T\right\rangle=0\right\}$
is an integrable distribution, \cite{ChenTNsub}.
\end{lemma}

Now, we would like to give the following result obtained from the above Lemma \ref{intd}, directly.

\begin{remark} \label{DDbotint}
Let $M$ be a space-like GCR hypersurface in $\mathbb{E}_{1}^{n+1}$ Minkowski spaces. 
Then, $\mathbf D=Span\left\{{e_2,\hdots,e_n}\right\}$ and $\mathbf D^{\bot}=Span\left\{{e_1}\right\}$ 
are integrable distributions on $M$.
\end{remark} 

Now, we will obtain with the geometrical properties of GCR hypersurfaces given in the following proposition.

\begin{proposition}
\label{PROPOPP2Ext} Let $M$ be an oriented hypersurface in the Minkowski
space $\mathbb{E}^{n+1}_1$ and $x$ its position vector. Consider a unit tangent vector field 
$\displaystyle e_1=\frac{x^T}{\varepsilon(\langle x^T,x^T\rangle)^{1/2}}$ along $x^T$. Then, $M$ is
a GCR hypersurface if and only if a curve $\alpha$ is a geodesic of $M$
whenever it is an integral curve of $e_1$.
\end{proposition}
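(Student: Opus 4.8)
The plan is to mirror the Euclidean equivalence (i)$\Leftrightarrow$(iii) of Proposition \ref{PROPOPP2}, the one genuinely new feature being that $e_1$ may now be space-like or time-like; the hypothesis that $x^T$ is not light-like is exactly what guarantees $\varepsilon:=\langle e_1,e_1\rangle=\pm1$ is a nonzero constant. The starting point is the flatness of the ambient space: since $\widetilde\nabla$ is the Levi-Civita connection of $\mathbb E^{n+1}_1$ and $x$ is the position vector field, one has $\widetilde\nabla_X x=X$ for every $X\in\Gamma(TM)$. I would write $x=x^T+x^\perp$ with $x^T=\varphi e_1$ and $x^\perp=\psi N$ for smooth functions $\varphi,\psi$, substitute into $\widetilde\nabla_X x=X$, and expand using the Gauss formula \eqref{WeingrFormula} together with the Weingarten formula $\widetilde\nabla_X N=-S(X)$.

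Separating tangential and normal components then produces the two identities
\begin{eqnarray}
\nabla_X x^T-\psi\,S(X)&=&X, \label{plantang}\\
h(X,x^T)+X(\psi)\,N&=&0. \label{plannorm}
\end{eqnarray}
Equation \eqref{plantang} is the workhorse. Setting $X=e_1$ and using $x^T=\varphi e_1$ gives $\nabla_{e_1}x^T=e_1(\varphi)e_1+\varphi\,\nabla_{e_1}e_1$, while the right-hand side becomes $e_1+\psi\,S(e_1)$. Because $\langle e_1,e_1\rangle=\varepsilon$ is constant, metric compatibility yields $\langle\nabla_{e_1}e_1,e_1\rangle=0$, so $\nabla_{e_1}e_1$ is already orthogonal to $e_1$. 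Projecting the resulting identity onto the orthogonal complement of $e_1$ collapses everything to the single clean relation $\varphi\,\nabla_{e_1}e_1=\psi\,(S(e_1))^{\perp}$, where $(S(e_1))^{\perp}$ denotes the component of $S(e_1)$ orthogonal to $e_1$.

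From this relation both implications follow immediately on the region where $x^T\ne0$, hence $\varphi\ne0$. If $M$ is GCR then $S(e_1)=k_1 e_1$, so $(S(e_1))^{\perp}=0$ and therefore $\nabla_{e_1}e_1=0$, i.e.\ the integral curves of $e_1$ are geodesics. Conversely, if every integral curve of $e_1$ is a geodesic then $\nabla_{e_1}e_1=0$, whence $\psi\,(S(e_1))^{\perp}=0$; at points where $\psi\ne0$ this forces $(S(e_1))^{\perp}=0$, which is precisely the principal-direction condition defining GCR.

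The one delicate point — and the step I expect to require the most care — is the locus $\{\psi=0\}$, where the position vector is tangent to $M$ and the relation above degenerates, so it cannot by itself separate the two conditions. There I would invoke the normal identity \eqref{plannorm}: evaluating it on vectors $Y$ orthogonal to $e_1$ and using \eqref{hARelatedBy} to rewrite $h(Y,x^T)$ in terms of $S(e_1)$ gives $\varphi\,\langle (S(e_1))^{\perp},Y\rangle=-\langle N,N\rangle\,Y(\psi)$. On any open set where $\psi\equiv0$ the right-hand side vanishes for all such $Y$, and since the induced metric is nondegenerate this forces $(S(e_1))^{\perp}=0$ there as well; a continuity argument across the boundary of $\{\psi=0\}$ then upgrades the pointwise conclusion to the whole region, completing the converse and hence the equivalence.
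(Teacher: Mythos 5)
Your proof follows essentially the same route as the paper's: differentiate the decomposition $x=\varphi e_1+\psi N$ along $e_1$, use $\widetilde{\nabla}_{e_1}x=e_1$ together with the Gauss and Weingarten formulas, and read off the relation $\varphi\,\nabla_{e_1}e_1=\psi\,(S(e_1))^{\perp}$ from the component orthogonal to $e_1$, from which both implications follow. If anything you are more careful than the paper, whose proof silently assumes $\langle x,N\rangle\neq 0$ in the converse direction, whereas you explicitly dispose of the locus $\psi=0$ via the normal-component identity.
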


\begin{proof}
We will consider being space-like or time-like of $x^T$, separately.

\textbf{Case I.} Let $x^{T}$ is time-like. In this case, $%
e_{1}=x^{T}/(-\langle x^{T},x^{T}\rangle )^{1/2}$ is time-like and $M$ is
Lorentzian. Thus, we have 
\begin{equation*}
x=-\langle x,e_{1}\rangle e_{1}+\langle x,N\rangle N.
\end{equation*}%
Since $\widetilde{\nabla }_{e_{1}}x=e_{1}$, this equation yields 
\begin{equation*}
e_{1}=(1-\langle x,N\rangle \langle Se_{1},e_{1}\rangle )e_{1}-\langle
x,e_{1}\rangle \widetilde{\nabla }_{e_{1}}e_{1}+\langle x,Se_{1}\rangle
N-\langle x,N\rangle Se_{1}.
\end{equation*}%
The tangential part of this equation yields $Se_{1}=k_{1}e_{1}$ if and only
if $\nabla _{e_{1}}e_{1}=0$ which is equivalent to being geodesic of all
integral curves of $e_{1}$.

\textbf{Case II.} Let $x^T$ is space-like. In this case, $e_1=x^T/(\langle
x^T,x^T\rangle)^{1/2}$ is space-like. Thus, we have 
\begin{equation}  \label{PROPCASEIIDECOMOP}
x=\langle x,e_1\rangle e_1+\varepsilon\langle x,N\rangle N,
\end{equation}
where $\varepsilon$ is either 1 or -1 regarding to being time-like or
space-like of $M$, respectively.

Similar to Case I, we obtain $Se_{1}=k_{1}e_{1}$ if and only if $\nabla
_{e_{1}}e_{1}=0$. Consequently, the proof is completed.
\end{proof}

Now, assume that $M$ is an oriented space-like GCR hypersurface in $\mathbb{E}%
_{1}^{n+1}$, $x$ its position vector satisfies the condition $\mu=\sqrt{\left|\left\langle x,x\right\rangle\right|}$
 and $\{e_1,e_2,$ $\hdots,e_n;N\}$ is a local orthonormal frame
field consisting of principal directions of $M$, $k_1,k_2,\hdots,k_n$ are
corresponding principal curvatures and $e_1$ is proportional to $x^T$ and
 $N$ is the unit normal vector field associated with the orientation of $M$. 
Note that $N$ is time-like because of the hypersurface $M$ being a space-like one. Further, let
 $\left\langle x^T,x^T\right\rangle \neq0$ and $e_1=x^T/(\langle
x^T,x^T\rangle)^{1/2}$. Also we can locally assume 
$\mu \neq0$, since $M$ is non-degenerated. Thus, the position vector $x$ of $M$ satisfies either $\langle
x,x\rangle<0$ or $\langle x,x\rangle>0$.

\textbf{Case I}. Let $\langle x,x\rangle=\mu^2$. In this case, %
\eqref{PosVectDecomp1} implies 
\begin{equation}  \label{GCRE14DexompofxCaseI}
x=\mu \mathrm{cosh}\theta e_{1}-\mu \mathrm{sinh}\theta N
\end{equation}%
and from the assumption we have
\begin{subequations}
\label{GCRE14DexompofxCasenueqsALL}
\begin{eqnarray}
e_{1}(\mu ) &=&\mathrm{cosh}\theta , \\
e_{j}(\mu ) &=&0,\quad j=2,3,\hdots,n.
\end{eqnarray}%
One consider the equalities \eqref{GCRE14DexompofxCasenueqsALL} and 
$\langle e_1,e_1\rangle =1$ in the decomposition \eqref{GCRE14DexompofxCaseI}, then  
\end{subequations}
\begin{subequations}
\label{GCRE14DexompofxCase1Eq1ALL}
\begin{eqnarray}
\nabla _{e_{1}}e_{1} &=&0,  \label{GCRE14DexompofxCase1Eq1ALLEQ1} \\
e_{1} &=&(-\mathrm{cosh}^{2}\theta +\mu \mathrm{sinh}\theta e_{1}(\theta
)+k_{1}\mu \mathrm{sinh}\theta )e_{1}  \notag \\
&&(-k_{1}\mu \mathrm{cosh}\theta -\mathrm{sinh}\theta \mathrm{cosh}\theta
-\mu \mathrm{cosh}\theta e_{1}(\theta ))N, \\
e_{j} &=&\mu \mathrm{sinh}\theta e_{j}(\theta )e_{1}+\mu \mathrm{cosh}\theta
\nabla _{e_{j}}e_{1}  \notag \\
&&-\mu \mathrm{cosh}\theta e_{j}(\theta )N+k_{j}\mu \mathrm{sinh}\theta
e_{j},\quad j=2,3,\hdots,n
\end{eqnarray}%
are obtained. From there, we obtain
\end{subequations}
\begin{subequations}
\label{GCRE14DexompofxCase1Eq2ALL}
\begin{eqnarray}
k_{1} &=&-e_{1}(\theta )-\frac{\mathrm{sinh}\theta }{\mu },
\label{GCRE14DexompofxCase1Eq2a} \\
e_{j}(\theta ) &=&0,  \label{GCRE14DexompofxCase1Eq2b} \\
\nabla _{e_{j}}e_{1} &=&\frac{1-k_{j}\mu \mathrm{sinh}\theta }{\mu \mathrm{%
cosh}\theta }e_{j}, \quad j=2,3,\hdots,n.
\end{eqnarray}%
\end{subequations}

\textbf{Case II}. Let $\langle x,x\rangle=-\mu^2$. In this case, \eqref{PosVectDecomp1}%
\label{PageCaseI} implies 
\begin{equation}  \label{GCRE14DexompofxCaseII}
x=\mu \mathrm{sinh}\theta e_{1}-\mu \mathrm{cosh}\theta N.
\end{equation}%
Since $\langle x,x\rangle =-\mu ^{2}$, we get
\begin{subequations}
\label{GCRE14DexompofxCaseIInueqsALL}
\begin{eqnarray}
e_{1}(\mu ) &=&-\mathrm{sinh}\theta , \\
e_{l}(\mu ) &=&0,\quad l=2,3,\hdots,n.
\end{eqnarray}%
By considering the equalities \eqref{GCRE14DexompofxCaseIInueqsALL} and $\langle e_1,e_1\rangle =1$
 in \eqref{GCRE14DexompofxCaseII}, we obtain 
\end{subequations}
\begin{subequations}
\label{GCRE14DexompofxCase2Eq1ALL}
\begin{eqnarray}
\nabla _{e_{1}}e_{1} &=&0,  \label{GCRE14DexompofxCase2Eq1ALLEQ1} \\
e_{1} &=&(-\mathrm{sinh}^{2}\theta +\mu \mathrm{cosh}\theta e_{1}(\theta
)+k_{1}\mu \mathrm{cosh}\theta )e_{1}  \notag \\
&&(-k_{1}\mu \mathrm{sinh}\theta -\mathrm{sinh}\theta \mathrm{cosh}\theta
-\mu \mathrm{sinh}\theta e_{1}(\theta ))N, \\
e_{l} &=&\mu \mathrm{cosh}\theta e_{l}(\theta )e_{1}+\mu \mathrm{sinh}\theta
\nabla _{e_{l}}e_{1}  \notag \\
&&-\mu \mathrm{sinh}\theta e_{l}(\theta )N+k_{l}\mu \mathrm{cosh}\theta
e_{l},\quad l=2,3,\hdots,n.
\end{eqnarray}%
So, we have
\end{subequations}
\begin{subequations}
\label{GCRE14DexompofxCase2Eq2ALL}
\begin{eqnarray}  \label{GCRE14DexompofxCase2Eq2c}
k_{1} &=&-e_{1}(\theta )-\frac{\mathrm{cosh}\theta }{\mu },
\label{GCRE14DexompofxCase2Eq2a} \\
e_{l}(\theta ) &=&0,  \label{GCRE14DexompofxCase2Eq2b} \\
\nabla _{e_{l}}e_{1} &=&\frac{1-k_{l}\mu \mathrm{cosh}\theta }{\mu \mathrm{%
sinh}\theta }e_{l},\quad l=2,3,\hdots,n.
\end{eqnarray}%

Note that since $M$ is a space-like hypersurface, so $S$ its shape operator can be diagonalized, 
i.e., $S(e_i)=k_i e_i$,  $i=1,\hdots,n$. Therefore, the Codazzi equations
\eqref{MlinkCodazzi} can be written as
\end{subequations}
\begin{equation}  \label{jij}
e_i(k_j) = \omega_{ij}(e_j)(k_i-k_j), \quad i,j=1,\hdots,n
\end{equation}
and 
\begin{equation}  \label{ijk}
\omega_{ij}(e_k)(k_i-k_j) = \omega_{ik}(e_j)(k_i-k_k), \quad i,j,k=1,\hdots,n,
\end{equation}
for each triplet (j,i,j) and (i,j,k), respectively.

By summing up the results obtained so far, we would like to state following
proposition.

\begin{proposition}\label{PROPOPP2ExtVersion2}
Let $M$ be a space-like hypersurface in the Minkowski space $\mathbb{E}%
_{1}^{n+1}$ and $\langle x,x\rangle <0$ (resp. $\langle x,x\rangle >0$).
Then $M$ is GCR hypersurface if and only if $Y(\theta )=0$, whenever $%
\langle Y,x^{T}\rangle =0$, where $\theta $ is the angle function define in %
\eqref{GCRE14DexompofxCaseI} (resp. \eqref{GCRE14DexompofxCaseII}).
\end{proposition}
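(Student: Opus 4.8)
The plan is to set up a \emph{non-principal} orthonormal frame adapted to $x^{T}$ and to extract a single identity relating $\tilde e_{i}(\theta)$ to the off-diagonal quantities $\langle Se_{1},\tilde e_{i}\rangle$ of the shape operator; both implications then fall out at once. First I would observe that the decomposition \eqref{DexompofxCaseI} (resp.\ \eqref{DexompofxCaseII}) and the angle function $\theta$ it defines do not presuppose that $M$ is GCR: they only require $x^{T}$ to be space-like and non-zero, so that $e_{1}=x^{T}/\|x^{T}\|$ is a well-defined unit tangent field wherever $x^{T}\neq 0$. I complete $e_{1}$ to a local orthonormal tangent frame $\{e_{1},\tilde e_{2},\dots,\tilde e_{n}\}$, \emph{without} assuming the $\tilde e_{i}$ to be principal. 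Then for $i\geq 2$ one has $\langle \tilde e_{i},x^{T}\rangle=0$, and since $\tilde e_{i}\langle x,x\rangle=2\langle \tilde e_{i},x^{T}\rangle=0$ it follows that $\tilde e_{i}(\mu)=0$.

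Next I would differentiate the position vector along $\tilde e_{i}$, using $\widetilde\nabla_{\tilde e_{i}}x=\tilde e_{i}$ together with the Gauss and Weingarten formulas \eqref{WeingrFormula} and the relation $h(X,Y)=-\langle SX,Y\rangle N$, which is valid because $M$ is space-like, so $\langle N,N\rangle=-1$. Taking the inner product of the resulting equation with $N$ annihilates every tangential term as well as the $\tilde e_{i}(\theta)e_{1}$ term, while $\tilde e_{i}(\mu)=0$ removes the derivatives of $\mu$; what survives is, in Case I,
\begin{equation*}
\mu\sinh\theta\,\bigl(\langle S\tilde e_{i},e_{1}\rangle-\tilde e_{i}(\theta)\bigr)=0,\qquad i=2,\dots,n,
\end{equation*}
and in Case II the same identity with $\sinh\theta$ replaced by $\cosh\theta$. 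Dividing by the scalar factor yields the key relation $\tilde e_{i}(\theta)=\langle S\tilde e_{i},e_{1}\rangle=\langle Se_{1},\tilde e_{i}\rangle$, where the last equality uses that $S$ is self-adjoint with respect to the (Riemannian) induced metric.

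From this relation both directions are immediate. Since $\{\tilde e_{2},\dots,\tilde e_{n}\}$ spans the orthogonal complement of $x^{T}$ in $TM$, the condition ``$Y(\theta)=0$ whenever $\langle Y,x^{T}\rangle=0$'' is equivalent to $\tilde e_{i}(\theta)=0$ for all $i\geq 2$, hence to $\langle Se_{1},\tilde e_{i}\rangle=0$ for all $i\geq 2$; and the latter says precisely that $Se_{1}$ has no component orthogonal to $e_{1}$, i.e.\ $Se_{1}=\langle Se_{1},e_{1}\rangle e_{1}=k_{1}e_{1}$, which is the GCR condition. The only point demanding care---the main obstacle---is the division by the scalar factor: in Case II this is harmless because $\cosh\theta\geq 1$, whereas in Case I one must note that $\sinh\theta=0$ would force $x^{T}=\mu\sinh\theta\,e_{1}=0$, contradicting the standing hypothesis that $e_{1}$ is defined, so $\mu\sinh\theta\neq 0$ exactly on the locus where the statement is meaningful. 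I would also remark that the forward implication is already contained in the computations \eqref{DexompofxCase1Eq2ALL} and \eqref{DexompofxCaseIInueqsALL} preceding the proposition, which were carried out in a principal frame; the approach above simply recovers it together with the converse in one stroke.
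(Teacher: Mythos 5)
Your proof is correct, and at heart it is the same computation the paper performs: differentiate the decomposition \eqref{DexompofxCaseI} (resp.\ \eqref{DexompofxCaseII}) along tangent directions orthogonal to $x^{T}$ and read off the resulting components. The difference is that the paper carries this out only in a frame of principal directions, i.e.\ already assuming $M$ is GCR, which yields necessity (the equations $e_{i}(\theta)=0$ in \eqref{DexompofxCase1Eq2ALL} and \eqref{DexompofxCaseIInueqsALL}) and then dismisses sufficiency as following ``from a direct computation.'' By working instead in a non-principal adapted frame you obtain the single identity $\mu\sinh\theta\,\bigl(\langle S\tilde e_{i},e_{1}\rangle-\tilde e_{i}(\theta)\bigr)=0$ (resp.\ with $\cosh\theta$), hence $\tilde e_{i}(\theta)=\langle Se_{1},\tilde e_{i}\rangle$, from which both implications follow at once; in particular your argument actually supplies the converse that the paper leaves implicit. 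Your sign conventions check out: $h(X,Y)=-\langle SX,Y\rangle N$ is exactly \eqref{hARelatedBy} with $\langle N,N\rangle=-1$, and your remark that $\sinh\theta\neq0$ wherever $e_{1}=x^{T}/\|x^{T}\|$ is defined correctly disposes of the only division by a possibly vanishing factor, a point the paper does not address.
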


\begin{proof}
Let $M$ be a space-like GCR hypersurface and $x$ its position vector. In this case,  
the position vector $x$ is the one of the decomposition given in \eqref{GCRE14DexompofxCaseI} or 
\eqref{GCRE14DexompofxCaseII}. The necessary condition follows from \eqref{GCRE14DexompofxCase1Eq2b} (resp. %
\eqref{GCRE14DexompofxCase2Eq2b}), directly. The converse follows from a direct
computation.
\end{proof}

Although it is out of scope of this paper, we would like to state the
following result which is a direct result of \eqref{GCRE14DexompofxCase1Eq2ALL} and %
\eqref{GCRE14DexompofxCase2Eq2ALL}.

\begin{proposition}
Let $M$ be a space-like GCR hypersurface in the Minkowski space $\mathbb{E}%
^{n+1}_1$ and $e_1$ is a unit normal vector field along $x^T$. Then there
exists a local coordinate function $\hat s$ such that $e_1=\partial_{\hat s}$.
\end{proposition}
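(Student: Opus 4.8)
The plan is to invoke the classical straightening (flow box) theorem for the nowhere-vanishing vector field $e_1$. First I would observe that, since $e_1$ is a unit vector field, it is in particular nowhere zero on $M$, so around each point $p\in M$ the fundamental theorem on flows furnishes a local chart $(\hat s,u_2,\dots,u_n)$ in which $e_1$ coincides with the coordinate field $\partial_{\hat s}$. This already establishes the existence of a coordinate function $\hat s$ with $e_1=\partial_{\hat s}$, and is in fact independent of the GCR hypothesis.

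The role of the GCR assumption is then to pin down the geometric meaning of $\hat s$. By Proposition~\ref{PROPOPP2Ext} the integral curves of $e_1$ are geodesics of $M$, equivalently $\nabla_{e_1}e_1=0$ (cf. \eqref{DexompofxCase1Eq1ALLEQ1} and its Case II counterpart in \eqref{DexompofxCaseIInueqsALL}). Since $e_1$ has unit length, each such integral curve is a unit-speed geodesic, so the parameter $\hat s$ along it is precisely arc length. Concretely, I would choose an $(n-1)$-dimensional submanifold $\Sigma$ through $p$ that is transverse to $e_1$, and flow by the flow $\varphi_{\hat s}$ generated by $e_1$; the map $(\hat s,q)\mapsto\varphi_{\hat s}(q)$ with $q\in\Sigma$ has first coordinate field exactly $e_1$, since $\tfrac{d}{d\hat s}\varphi_{\hat s}(q)=e_1(\varphi_{\hat s}(q))$.

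The only point requiring care is that this flow map be a genuine local diffeomorphism onto a neighbourhood of $p$, so that $\hat s$ is a bona fide coordinate function. This is guaranteed because $e_1$ is smooth and transverse to $\Sigma$, whence the differential of the flow map at $(0,p)$ is invertible and the inverse function theorem applies. Note that no integrability hypothesis on the orthogonal distribution is needed, as we require only $e_1$ itself—and not the full frame $\{e_1,\dots,e_n\}$—to be realized as a coordinate field; this is what keeps the argument elementary and makes the result a direct consequence of \eqref{DexompofxCase1Eq2ALL} and \eqref{DexompofxCaseIInueqsALL}. In the resulting chart $e_1=\partial_{\hat s}$, which proves the proposition.
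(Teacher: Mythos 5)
Your argument is valid for the statement as literally written, but it takes a genuinely different route from the paper's, and the two routes do not prove the same thing. The paper never invokes the flow box theorem: it forms the one-form $\zeta_1=\langle e_1,\,\cdot\,\rangle$ dual to $e_1$, uses the GCR identities $\nabla_{e_1}e_1=0$ and $\nabla_{e_i}e_1=\lambda_i e_i$ (see \eqref{DexompofxCase1Eq1ALLEQ1}, \eqref{DexompofxCase1Eq2b} and their Case~II analogues in \eqref{DexompofxCaseIInueqsALL}) to verify $d\zeta_1=0$, and then applies the Poincar\'e lemma to get $\zeta_1=d\hat s$. The function $\hat s$ obtained this way is not merely a flow parameter: its differential is the metric dual of $e_1$, so $e_1(\hat s)=1$ and $e_i(\hat s)=0$, the level sets of $\hat s$ are the leaves of the (integrable, by \eqref{DexompofxCase1Eq2b}) distribution $e_1^{\perp}$, and any completion $(\hat s,u_2,\dots,u_n)$ automatically has $\partial_{u_i}\perp e_1$. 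You correctly observe that the bare conclusion ``$e_1=\partial_{\hat s}$ in some chart'' holds for every nowhere-vanishing vector field and needs no GCR hypothesis—but that observation cuts both ways: it shows your argument establishes only a content-free version of the proposition. In your flow-box chart the transversal $\Sigma$ need not be orthogonal to $e_1$, so your $\hat s$ need not satisfy $e_i(\hat s)=0$, which is the property actually used later (for instance in building the orthogonal coordinates $(s,t)$ with metric \eqref{TAGGED-3-17}). Your remark that no integrability of the orthogonal distribution is needed is precisely the point of divergence: that integrability is a consequence of GCR, not of smoothness, and it is what lets $\Sigma$ be chosen as an orthogonal leaf. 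To recover the statement the paper relies on, either compute $d\zeta_1=0$ as the paper does, or upgrade your construction by taking $\Sigma$ to be an integral manifold of $e_1^{\perp}$ and noting that orthogonality propagates along the flow, since $e_1\langle e_1,\partial_{u_i}\rangle=\langle e_1,\nabla_{\partial_{u_i}}e_1\rangle=\tfrac12\,\partial_{u_i}\langle e_1,e_1\rangle=0$ once $\nabla_{e_1}e_1=0$ and $[\,e_1,\partial_{u_i}]=0$.
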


\begin{proof}
We consider the case $\langle x,x\rangle<0$. The other case follows from an
analogous computation.

Let $\zeta _{1},\zeta _{2},\hdots,\zeta _{n}$ be the dual base of $%
e_{1},e_{2},\hdots,e_{n}$. By a direct computation using %
\eqref{GCRE14DexompofxCase2Eq1ALLEQ1} and \eqref{GCRE14DexompofxCase2Eq2c} , we obtain $%
d\zeta _{1}=0$ , i.e., $\zeta _{1}$ is closed. Poincare Lemma (see in \cite%
{ChenPRGeom2001}) yields that it is exact, i.e., there exists a local
coordinate function $s$ such that $\zeta _{1}=d\hat{s}$.
\end{proof}

\section{Space-like hypersurfaces in $\mathbb E^4_1$}

In this section, we consider space-like GCR hypersurfaces in the Minkowski 4-space with vanishing Gauss-Kronicker curvature. 

Let $M$ be a space-like hypersurface in $\mathbb{E}^4_1$ with the position vector $x$. We will consider the following cases independently: If $\langle x,x\rangle>0$ and $\langle x,x\rangle<0$, then $M$ is said to lay in the space-like cone and light-like cone of $\mathbb E^4_1$, respectively. 

\subsection{Hypersurfaces Lying in the Space-Like Cone}

In this subsection, we deal with the following case: the position vector $x$ of the
 hypersurface $M$ lies in the space-like cone, i.e., $\left\langle x,x\right\rangle =\mu ^{2}$.
 In this case, the position vector $x$ is decomposed as in \eqref{GCRE14DexompofxCaseI}
 for smooth functions $\theta$ and $\mu$ as before defined. By considering the equations 
\eqref{GCRE14DexompofxCasenueqsALL}-\eqref{GCRE14DexompofxCase1Eq2ALL}, %
\eqref{jij} and \eqref{ijk} for $n=3$, we will give the following, directly:

\begin{lemma} \label{Case1ClassThmDiagonSpacelikekClm11}
The Levi-Civita connection, $\nabla$ of the GCR hypersurface $M$ satisfies  
\begin{subequations} \label{CASEISpacelikeLeviCivitaEq11ALL}
\begin{eqnarray}
\label{CASEISpacelikeLeviCivitaEq11a}\nabla _{e_{1}}e_{1}=0, \quad \nabla _{e_{1}}e_{2}=\omega_{23}(e_1)e_3, \quad \nabla_{e_1}e_3=-\omega_{23}(e_1)e_2,
\\\label{CASEILeviCivitaEq11b}
\nabla _{e_{2}}e_{1}= \omega_{12}(e_2)e_2, \quad \nabla _{e_{2}}e_{2}=-\omega_{12}(e_2)e_1+\omega_{23}(e_2)e_3, \quad \nabla_{e_2}{e_3}=-\omega_{23}(e_2)e_2,
\\\label{CASEILeviCivitaEq11c}
\nabla _{e_{3}}e_{1}= \omega_{13}(e_3)e_3, \quad \nabla _{e_{3}}e_{2}=-\omega_{23}(e_3)e_3, \quad \nabla_{e_3}{e_3}=-\omega_{13}(e_3)e_1-\omega_{23}(e_3)e_2.
\end{eqnarray}
\end{subequations}
Here, $\displaystyle \omega_{1l}(e_l)=\frac{1-\mu \sinh \theta k_l}{\mu \cosh \theta}, l=2,3$.
 Also, for $i,j,k=1,2,3$, the principal curvature functions $k_1,k_2,k_3$ satisfy
\begin{subequations} \label{ClassThmDiagonSpacelikekCod1Case11ALL}
\begin{eqnarray} 
\label{ClassThmDiagonSpacelikekCod1Case111a}
\omega_{23} (e_1) (k_2-k_3) =0, &&
\\\label{ClassThmDiagonSpacelikekCod1Case111b}
e_2(k_1) = e_3(k_1) = 0, &&
\\\label{ClassThmDiagonSpacelikekCod1Case111c}
e_1(k_2) = \omega_{12}(e_2)(k_1-k_2), &\quad e_3(k_{2}) = \omega_{23}(e_2)(k_2-k_3),
\\\label{ClassThmDiagonSpacelikekCod1Case111d}
e_1(k_{3}) = \omega_{13}(e_3)(k_1-k_3), &\quad e_2(k_3) = \omega_{23}(e_3)(k_2-k_3). 
\end{eqnarray}
\end{subequations}
\end{lemma}

Before we proceed to our main result, we would like to give the following examples of GCR hypersurfaces
with vanishing Gauss-Kronecker curvature:

\begin{example} \label{GCRE14orn1}
Let $M = \mathbb H^2 \times \mathbb E^1$ in Minkowski spaces $\mathbb E^4_1$ be a hypercylinder parametrized as 
$$x(s,t,u)=\Big(x_1(s,t),x_2(s,t),x_3(s,t),u\Big).$$
Further, its normal is $N(s,t)=\Big(x_1(s,t),x_2(s,t),$ $x_3(s,t),0\Big)$. Therefore, $x$ can be written as
 $\displaystyle x = u \frac{\partial}{\partial u}+ N$. Note that, here the tangent vector $\displaystyle%
\frac{\partial}{\partial u}$ is the principal direction corresponding with the principal curvature $k_1=0$.
Consequently, the hypercylinder $M$ with vanishing Gauss-Kronecker curvature is a GCR hypersurface.
\end{example}

\begin{example} \label{GCRE14orn2}
Let $\alpha$ be a unit speed curve lying on $\mathbb S^3_1(1) \subset \mathbb E^4_1$ 
and $F_1(u),F_2(u)$ two orthonormal vector fields spanning the normal space of $\alpha$ in $\mathbb S^3_1(1)$,
 i.e., 
\begin{eqnarray}
\left\langle F_1,F_2\right\rangle=\left\langle F_1,\alpha \right\rangle=\left\langle F_1,\alpha^{\prime}\right\rangle=\left\langle F_2,\alpha \right\rangle=\left\langle F_2, \alpha^{\prime}\right\rangle=0, \\
\left\langle F_1,F_1\right\rangle=\left\langle F_2,F_2\right\rangle=1.
\end{eqnarray}
Consider the hypersurface in $\mathbb E^4_1$ given by 
\begin{equation} \label{GCRE14x1}
x(s,t,u)=s \alpha(u)-c \Big(F_1(u) \cosh \left(\frac{t}{c}\right)+F_2(u) \sinh \left(\frac{t}{c}\right)\Big)
\end{equation}
for a non-constant $c$. One can check that the unit normal vector
field of $M$ is $N= F_1(u) \cosh \left(\frac{t}{c}\right)+F_2(u) \sinh \left(\frac{t}{c}\right)$
 by a direct computation and principal directions of $M$ are obtained as $e_1= \partial_s =\alpha(u), \quad e_2= \partial_t,%
 \quad e_3= \frac{1}{\left\|x_u\right\|} \partial_u $ corresponding to principal curvatures $0,\frac{1}{c}, k_3$, respectively.
 Therefore, the hypersurface $M$ is a GCR and its Gauss-Kronecker curvature vanishes because of the first principal curvature $k_1=0$. 
\end{example}

\begin{example} \label{GCRE14orn3}
Let $y : \Lambda \longrightarrow \mathbb S^3_1(1) \subset \mathbb E^4_1$
 be an oriented regular surface with the spherical normal $N$, where 
$\Lambda$ is an open subset in $\mathbb R^2$. Consider the hypersurface $M$ in $\mathbb E^4_1$ given by
\begin{eqnarray} 
\nonumber x: I \times \Lambda \longrightarrow \mathbb E^4_1, \\
\label{GCRE14x2} x(s,t,u)=s y(t,u)-c N(t,u)
\end{eqnarray}
where $c$ is a constant. Here, since the vector field $N$ is the spherical normal of the surface $y$, we have 
$$\left\langle y_t,N\right\rangle = \left\langle y_s,N\right\rangle = \left\langle y,N\right\rangle = 0.$$
By considering these in \eqref{GCRE14x2}, we get directly 
$$\left\langle x_t,N\right\rangle = \left\langle x_s,N\right\rangle = \left\langle x_u,N\right\rangle = 0.$$
 So, one can conclude that the vector field $N$ is also the unit normal of the hypersurface of $M$.
 Furthermore, we have $x_{ss}=0$ from \eqref{GCRE14x2} and also
$\left\langle x_{st},N\right\rangle =0$ and $\left\langle x_{tt},N\right\rangle =0$. 
 So, $h(\partial_s,X)=0$ is satisfied for all tangent vector $X$ on $M$ which say that $S(\partial_s)=0$.
 Therefore, the Gauss-Kronecker curvature of the hypersurfece $M$ vanishes and 
$\partial_s=y(t,u)$ is a principal direction of the hypersurface. Consequently, $M$ is a GCR hypersurface.
\end{example}

In the rest of this part, we will prove the following theorem:

\begin{theorem} \label{GCRE14Spacelikekoni}
Let $M$ be a space-like hypersurface with vanishing Gauss-Kronecker curvature
 in the Minkowski space $\mathbb E^4_1$. Then, $M$ is a GCR hypersurface if and only if it is congruent to one of the following 3 types of hypersurfaces.
\begin{enumerate}
\item [(i)] A part of the hypercylinder given in Example \ref{GCRE14orn1},

\item [(ii)] A hypersurface parametrized with \eqref{GCRE14x1} in Example \ref{GCRE14orn2},

\item [(iii)] A hypersurface parametrized with \eqref{GCRE14x2} in Example \ref{GCRE14orn3}.
\end{enumerate}
\end{theorem}

In order to do the proof of Theorem \ref{GCRE14Spacelikekoni}, we will firstly prove the followings:

\begin{proposition} \label{k10}
Let $M$ be a space-like hypersurface in the Minkowski space $\mathbb E^4_1$. If
 the Gauss-Kronecker curvature of $M$ vanishes, then the principal curvature $k_1$ 
of $M$ identically vanishes, i.e., the following is satisfied
\begin{equation} \label{spacelikee1theta}
e_1(\theta)=-\frac{\sinh \theta}{\mu}.
\end{equation}
\end{proposition}

\begin{proof}
Let $M$ be a space-like hypersurface with vanishing Gauss-Kronecker curvature 
in the Minkowski space $\mathbb E^4_1$ and $e_1=x^T/\|x^T\|,e_2,e_3$ its principal 
directions with corresponding principal curvatures $k_1,k_2,k_3$, respectively, at a point of 
$p \in M$. Assume towards contradiction, that $k_1(p) \neq 0$ at the point $p$. In this case
there exist a neighborhood $\mathcal N_p$ of $p$ on which $k_1$ does not vanish.
 Since $M$ is a space-like GCR hypersurface, its shape operator $S$ can be diagonalized. 
 Moreover, since the hypersurface $M$ is a flat, $det S = k_1 k_2 k_3 =0$. 
Note that as $k_1 \neq 0$, one conclude $k_2 = 0$ or $k_3 = 0$ for all point on $\mathcal N_p$. 

Without loss of generality, assume $k_2 = 0$. In this case, we get directly
$\displaystyle \omega_{12}(e_2)=\frac{1}{\mu \cosh \theta}$ for $l=2$, from 
Lemma \ref{Case1ClassThmDiagonSpacelikekClm11}. On the other hand, if we consider the first equation given in
 \eqref{ClassThmDiagonSpacelikekCod1Case111c} with the last result, then we get $\displaystyle 0=\frac{1}{\mu \cosh \theta}k_1$ on
$\mathcal N_p$ which is a contradiction. Thus, we have $k_1=0$ on $M$. Consequently, if this result substitutes in 
\eqref{GCRE14DexompofxCase2Eq2a}, then we obtain \eqref{spacelikee1theta}.
\end{proof}

We also need the following being the result of the above proposition.

\begin{corollary} 
Let $M$ be a space-like hypersurface in the Minkowski space $\mathbb E^4_1$. If
 the Gauss-Kronecker curvature of $M$ vanishes, then the position vector $x$ given in \eqref{GCRE14DexompofxCaseI} 
is decomposed as  
\begin{equation} \label{GCRE14newx}
x = f e_1-c N
\end{equation}
where the smooth function $f$ satisfies
\begin{equation} \label{GCRE14f1}
e_1(f)=1, \quad e_2(f)=e_3(f)=0
\end{equation}
and $c$ is a non-zero constant.  
\end{corollary}

\begin{proof}
Let $M$ be a space-like hypersurface with vanishing Gauss-Kronecker curvature 
in the Minkowski space $\mathbb E^4_1$, i.e., the equation \eqref{spacelikee1theta} 
is satisfied. Considering with together \eqref{spacelikee1theta} and equations given in \eqref{GCRE14DexompofxCasenueqsALL} for $n=3$, 
 then we have
\begin{subequations} \label{e1muEq1ALL}
\begin{eqnarray} 
\label{e1muEq1a} e_1(\mu \cosh \theta)= \cosh \theta \cosh \theta + \mu \sinh \theta \Big(-\frac{\sinh \theta}{\mu}\Big) = 1, &&\\
\label{e1muEq1b} e_1(\mu \sinh \theta)= \cosh \theta \sinh \theta + \mu \cosh \theta \Big(-\frac{\sinh \theta}{\mu}\Big) = 0.
\end{eqnarray} 
\end{subequations}
 Morever, if the last equalities are considered in \eqref{GCRE14DexompofxCaseI} 
with a smooth function $f$ defined by \eqref{GCRE14f1}, then we obtain directly \eqref{GCRE14newx}. 
Therefore, the proof of Corollary is completed. 
\end{proof}

Now, we are ready to start the proof of the first main theorem:

\noindent\textit{The proof of Theorem 2.} The position vector $x$ of a space-like GCR hypersurface 
$M$ with vanishing Gauss-Kronecker curvature is given in \eqref{GCRE14newx}. 
Note that if the equalities \eqref{e1muEq1ALL} consider in 
$\displaystyle \omega_{1j}(e_j)=\frac{1-\mu \sinh \theta k_j}{\mu \cosh \theta}$ 
such that $j=2,3$, then we get 
\begin{equation} \label{GCRE14omeganew}
\omega_{1j}(e_j)=\frac{1-c k_j}{f}, j=2,3
\end{equation}
where $c$ is a non-zero constant. On the other hand, we get $\lbrack e_2, e_3] = -\omega_{23}(e_2)e_2-\omega_{23}(e_3)e_3$
 by considering \eqref{CASEISpacelikeLeviCivitaEq11ALL}.
So, we conclude the distribution $\mathbf{D}^{\bot} = Span\left\{e_2,e_3\right\}$ 
is involutive and the distribution $\mathbf{D }= Span\left\{e_1\right\}$ is trivially involutive. 
Therefore, there exist $(s,t,u)$ local coordinate system 
such that $\mathbf{D }= Span\left\{\partial_s\right\}$ and $\mathbf{D}^{\bot} =
Span\left\{\partial_t,\partial_u\right\}$ by applying local Frobenius' Theorem. 
On the other hand, as before mentioned in Proposition \ref{k10}, we know the
 first principal curvature of the flat hypersurface $M$ vanishes. 
Thus we get $\widetilde{\nabla}_{e_1}{N}=0$ and $\widetilde{\nabla}_{e_1}{e_1}=0$ with direct calculation.
 From there, one conclude $e_1=e_1(t,u)$ and $N=N(t,u)$. Furthermore, we get 
$f=f(s)$ from \eqref{GCRE14f1}. If the obtained expressions are substituted in the decomposition 
\eqref{GCRE14newx}, then the position vector $x$ is given by
\begin{equation}  \label{GCRE14genelnewx}
x(s,t,u) = f(s) e_1(t,u) - c N(t,u).
\end{equation}

Now, we want to consider three cases seperately:

\textbf{Case 1.} Let $\nabla_{e_2}{e_1}=0$ and $\nabla_{e_3}{e_1}=0$. In this case,
we get directly $\omega_{1l}(e_l)=0$ for $l=2,3$, from the first equalities in 
\eqref{CASEILeviCivitaEq11b} and \eqref{CASEILeviCivitaEq11c}, 
. Thus, the equations \eqref{GCRE14omeganew} imply $\displaystyle k_2=k_3=\frac{1}{c}$, i.e., 
$M$ is a space-like isoparametric hypersurface with principal curvatures of 
GCR hypersurface $M$ obtained as $0, 1/c, 1/c$. Thus, $M$ is a part of the hypercylinder
 $\mathbb{H}^2 \times \mathbb{E}^1$ given in Example \ref{GCRE14orn1},
 (see \cite{Nomizuiso}). Consequently, we have the case (i) of the Theorem.

\textbf{Case 2.} Let $\nabla_{e_2}{e_1}=0$ and $\nabla_{e_3}{e_1}\neq 0$.
In this case, we get directly $\omega_{12}(e_2)=0$ and $\omega_{13}(e_3) \neq 0$ from the first
 equalities in \eqref{CASEILeviCivitaEq11b} and \eqref{CASEILeviCivitaEq11c}, respectively.
 If the first of obtained equalities substitutes in \eqref{GCRE14omeganew} for $j=2$,
 then we get $k_2 = \frac{1}{c}$ where $k_2 \neq k_3$. If this expression is considered in 
 \eqref{ClassThmDiagonSpacelikekCod1Case111a} and %
\eqref{ClassThmDiagonSpacelikekCod1Case111c}, then we get $\omega_{23}(e_1)=0$ and $\omega_{23}(e_2)=0$,
 respectively. So, from \eqref{CASEISpacelikeLeviCivitaEq11ALL} 
we get $\lbrack e_2, e_3] = \omega_{23}(e_3)e_3$,
 $\lbrack e_1, e_2] =0$ and $\lbrack e_1, e_3] = -\omega_{13}(e_3)e_3$ with direct calculation. 
On the other hand since $\mathbf{D}= Span\left\{\partial s\right\}$ and $\mathbf{D}^{\bot}=
Span\left\{\partial_t,\partial_u\right\}$, there exist some smooth functions $a_{11}, a_{22}, a_{23}, a_{32}, a_{33}$
 such that
\begin{subequations} \label{eqALL}
\begin{eqnarray}  \label{eqe1}
e_1=a_{11} \frac{\partial}{\partial s}, \\
\label{eqe2} e_2=a_{22} \frac{\partial}{\partial t}+a_{23} \frac{\partial}{\partial u} \\
\label{eqe3} e_3=a_{32} \frac{\partial}{\partial t}+a_{33} \frac{\partial}{\partial u}.
\end{eqnarray}
By considering \eqref{eqALL} in $\lbrack e_1, e_2] =0$ and $\lbrack e_1, e_3] = -\omega_{13}(e_3)e_3$, we obtain  
\end{subequations}
\begin{equation}  \label{condition1}
a_{11}\Big((a_{22})_s \frac{\partial}{\partial t}+(a_{23})_s \frac{\partial}{%
\partial u}\Big)-\Big(a_{22} \frac{\partial a_{11}}{\partial t}+a_{23} \frac{%
\partial a_{11}}{\partial_u}\Big)=0
\end{equation}
and
\begin{equation}  \label{condition2}
a_{11}\Big((a_{32})_s \frac{\partial}{\partial t}+(a_{33})_s \frac{\partial}{%
\partial u}\Big)-\Big(a_{32} \frac{\partial a_{11}}{\partial t}+a_{33} \frac{%
\partial a_{11}}{\partial u}\Big)= -\omega_{13}(e_3)e_3,
\end{equation}
 respectively. From there, we have 
\begin{subequations} \label{condition4ALL}
\begin{eqnarray}  \label{condition4a}
(a_{22})_s=(a_{23})_s=0, \\ \label{condition4b}
\left( 
\begin{array}{cc}
a_{22} & a_{23} \\ 
a_{32} & a_{33}%
\end{array}%
\right)&\left( 
\begin{array}{c}
(a_{11})_t \\ 
(a_{11})_u%
\end{array}%
\right)=0.
\end{eqnarray}
\end{subequations}
 Since the vectors $e_2$ and $e_3$ are linear independent, the following is satisfied 
\begin{equation*}
\det \left( 
\begin{array}{cc}
a_{22} & a_{23} \\ 
a_{32} & a_{33}%
\end{array}%
\right) \neq 0.
\end{equation*}
So we conclude $(a_{11})_t=0$ and $(a_{11})_u=0$ from \eqref{condition4b}. Therefore, we have 
\begin{subequations}
\begin{equation}  
\label{e1a11} e_1=a_{11}(s) \frac{\partial}{\partial s}.
\end{equation}
By taking consider \eqref{condition4a} in \eqref{eqe2} yields 
\begin{equation}
\label{e2a22} e_2=a_{22}(t,u) \frac{\partial}{\partial t}+a_{23}(t,u) \frac{\partial}{%
\partial u}.
\end{equation}
 Now, we will consider the coordinate change such that $S=\Phi(s), T=\Psi_1(t,u), U=\Psi_2(t,u)$,
 i.e., 
\end{subequations}
\begin{subequations}
\begin{eqnarray}  \label{changes}
\frac{\partial}{\partial s}= \Phi^{\prime }\frac{\partial}{\partial S}, \\
\label{changet} \frac{\partial}{\partial t}= (\Psi_1)_t \frac{\partial}{\partial T}%
+(\Psi_2)_t \frac{\partial}{\partial U}, \\
\label{changeu} \frac{\partial}{\partial u}= (\Psi_1)_u \frac{\partial}{\partial T}%
+(\Psi_2)_u \frac{\partial}{\partial U}.
\end{eqnarray}
 From there, if the transformation \eqref{changes} substitutes in \eqref{e1a11}, then
 we obtain $\displaystyle e_1=a_{11}(s) \Phi^{\prime }(s) \frac{\partial}{%
\partial S}$. On the other hand if the transformations \eqref{changet} and \eqref{changeu} 
consider in \eqref{e2a22}, then we get
\end{subequations}
\begin{equation*}
e_2 = \Big(a_{22} (\Psi_1)_t+ a_{23} (\Psi_1)_u\Big)\frac{\partial}{\partial
T}+ \Big(a_{22} (\Psi_2)_t+ a_{23} (\Psi_2)_u\Big)\frac{\partial}{\partial T}.
\end{equation*}
 Here, if we choose the function $\Phi$ and the functions $\Psi_1,\Psi_2$ as $a_{11}(s) \Phi^{\prime }(s)=1$
and  
\begin{eqnarray*}
a_{22} (\Psi_1)_t+ a_{23} (\Psi_1)_u=1, \\
a_{22} (\Psi_2)_t+ a_{23} (\Psi_2)_u=0,
\end{eqnarray*}
respectively, then the system \eqref{eqALL} is written as
\begin{eqnarray*}
e_1 = \frac{\partial}{\partial S}, \\
e_2 = \frac{\partial}{\partial T} \\
e_3 = {\tilde{a}}_{32} \frac{\partial}{\partial T}+{\tilde{a_{33}}} \frac{%
\partial}{\partial U}.
\end{eqnarray*}
 By abusing the notation in the rest of the proof of the Case 2, we will take as 
$S=s$, $T=t$, $U=u$, ${\tilde{a}}_{32}=a_{32}$ and ${\tilde{a}}_{33}=a_{33}$.

Note that, by considering the above last obtained system and the expressions 
$\widetilde{\nabla}_{e_1}{N}=0$, $\widetilde{\nabla}_{e_1}{e_1}=0$,  $%
\widetilde{\nabla}_{e_2}{e_1}=0$ and $\widetilde{\nabla}_{e_3}{e_1} \neq 0$,
 one conclude $e_1=e_1(u)$ and $N=N(t,u)$. Thus, we can take as $f(s)=s$ 
from the equalities \eqref{GCRE14f1} with an appropriate selection of parameters. 
If these obtained results are substituted in the decomposition \eqref{GCRE14genelnewx}, 
then the position vector $x$ of $M$ is written by 
\begin{equation}  \label{GCRE14genelnewx1}
x(s,t,u) = s e_1(u) - c N(t,u).
\end{equation}
Morever, since the principal curvature corresponding to the tangent vector $e_2$ of 
this hypersurface is satisfied $k_2= \frac{1}{c}$, we get $\displaystyle \widetilde{\nabla}_{e_2}{e_2}=
x_{tt}= -\frac{N}{c}$ with direct calculation. When $x_{tt}= -cN_{tt}$ is considered to in the last expression,
 we get the following partial differential equation
\begin{equation*}
c^2 N_{tt}-N=0
\end{equation*}
 whose solution is given by  
\begin{equation}  \label{case2N}
N(t,u)=F_1(u) \cosh (\frac{t}{c})+ F_2(u) \sinh (\frac{t}{c}).
\end{equation}
 Here since the vector $N$ is the unit normal of hypersurface, the vector valued functions $F_1(u)$ and $%
F_2(u)$ must satisfy 
\begin{equation}  \label{f1f2}
\left\langle F_1,F_1\right\rangle =-1, \quad \left\langle
F_2,F_2\right\rangle =1, \quad \left\langle F_1,F_2\right\rangle =0
\end{equation}
and also $\left\langle N,e_1\right\rangle=0$ is satisfied. On the other hand 
from the expression \eqref{GCRE14genelnewx1}, we get $x_su={e_1}^{\prime}$. Since $x_s$ is 
a principal direction, so one conclude $\left\langle N,x_su\right\rangle =0$. The last obtained these equalities 
considering with \eqref{case2N}, we get 
\begin{equation}  \label{f1f2alpha}
\left\langle F_1,e_1\right\rangle =\left\langle F_2,e_1\right\rangle =0,
\quad \left\langle {e_1}^{\prime},F_1\right\rangle = \left\langle {e_1}%
^{\prime},F_2\right\rangle =0
\end{equation}
where ${\prime}$ shows the ordinary derivative. Thus, the system $\left\{e_1,{e_1}%
^{\prime};F_1,F_2\right\}$ satisfying \eqref{f1f2} and \eqref{f1f2alpha} 
defines an orthonormal field in Minkowski space $\mathbb{E}^4_1$. Note that, since 
$\left\langle e_1,e_1\right\rangle=1$ and $\left\langle e_1,{e_1}^{\prime}\right\rangle=0$ are satisfied, 
so $e_1=e_1(u)$ is really corresponded to the curve $\alpha$ given in Example \ref{GCRE14orn2}. 
Moreover, the system $\left\{F_1,F_2\right\}$ is the normal base of the curve $\alpha$ 
from \eqref{f1f2} and \eqref{f1f2alpha}. Consequently, we get the hypersurface given in the case (ii) of the Theorem.

\textbf{CASE 3.} Let $\nabla_{e_2}{e_1} \neq 0$ and $\nabla_{e_3}{e_1}\neq 0$. 
In this case, we see the vectors $\nabla_{e_2}{e_1}$ and $\nabla_{e_3}{e_1}$ are linear independent
from the equation \eqref{GCRE14DexompofxCase2Eq2c}. Therefore, the description $\sigma$ given by
\begin{eqnarray}
\sigma : \mathbf{D}^{\bot} \longrightarrow \mathbf{D}^{\bot},  \notag \\
\sigma(X) &=& \nabla_{X}{e_1}  \notag
\end{eqnarray}
is the one-to-one. From this statement, the vectors $\displaystyle 
\frac{\partial (e_1)}{\partial t}$ and $\displaystyle \frac{\partial (e_1)}{%
\partial u}$ are linear independent. Morever since $\left\langle
e_1,e_1\right\rangle=1$, 
\begin{eqnarray}
y : \Lambda \longrightarrow \mathbb{S}^3_1(1) \subset \mathbb{E}^4_1,  \notag
\\
y(t,u) &=& e_1(t,u)  \notag
\end{eqnarray}
defines a regular surface. On the other hand by considering $%
\left\langle x_s,N\right\rangle=\left\langle x_t,N\right\rangle=\left\langle
x_u,N\right\rangle=0$, we obtain $\left\langle
y,N\right\rangle=\left\langle y_t,N\right\rangle=\left\langle
y_u,N\right\rangle=0$. So, the vector field $N=N(t,u)$ is the normal of the surface $%
y=y(t,u)$ in the de Sitter space $\mathbb{S}^3_1(1)$. These obtained results substitute 
in the decomposition \eqref{GCRE14genelnewx}, we find the hypersurface given in the case (iii) of the Theorem.


\subsection{Hypersurfaces Lying in the Time-Like Cone}

In this subsection we treat the remaining case: the immersion $x$ of the hypersurface $M$ lies 
in the time-like cone, i.e., $\left\langle x,x\right\rangle =-\mu ^{2}$. In this case, the position 
function $x$ can be decomposed as in \eqref{GCRE14DexompofxCaseII} for smooth functions $\theta,\mu$ 
as before defined. Thus we will give directly the following Lemma for $n=3$, by considering 
\eqref{GCRE14DexompofxCaseIInueqsALL}-\eqref{GCRE14DexompofxCase2Eq2ALL}, \eqref{jij} and \eqref{ijk}:

\begin{lemma} \label{Case2ClassThmDiagonSpacelikekClm11}
The Levi-Civita connection, $\nabla$ of the GCR hypersurface $M$ satisfies  
\begin{subequations} \label{CASEIISpacelikeLeviCivitaEq11ALL}
\begin{eqnarray}
\label{CASEIISpacelikeLeviCivitaEq11a}\nabla _{e_{1}}e_{1}=0, \quad \nabla _{e_{1}}e_{2}=\omega_{23}(e_1)e_3, \quad \nabla_{e_1}e_3=-\omega_{23}(e_1)e_2,
\\\label{CASEIILeviCivitaEq11b}
\nabla _{e_{2}}e_{1}= \omega_{12}(e_2)e_2, \quad \nabla _{e_{2}}e_{2}=-\omega_{12}(e_2)e_1+\omega_{23}(e_2)e_3, \quad \nabla_{e_2}{e_3}=-\omega_{23}(e_2)e_2,
\\\label{CASEIILeviCivitaEq11c}
\nabla _{e_{3}}e_{1}= \omega_{13}(e_3)e_3, \quad \nabla _{e_{3}}e_{2}=-\omega_{23}(e_3)e_3, \quad \nabla_{e_3}{e_3}=-\omega_{13}(e_3)e_1-\omega_{23}(e_3)e_2.
\end{eqnarray}
\end{subequations}
Here, $\displaystyle \omega_{1l}(e_l)=\frac{1-\mu \sinh \theta k_l}{\mu \cosh \theta}$
such that $l=2,3$. Also, the principal curvatures $k_1,k_2,k_3$ satisfy 
\begin{subequations} \label{ClassThmDiagonSpacelikekCod1Case12ALL}
\begin{eqnarray} 
\label{ClassThmDiagonSpacelikekCod1Case121a}
\omega_{23} (e_1) (k_2-k_3) =0, &&
\\\label{ClassThmDiagonSpacelikekCod1Case121b}
e_2(k_1) = e_3(k_1) = 0, &&
\\\label{ClassThmDiagonSpacelikekCod1Case121c}
e_1(k_2) = \omega_{12}(e_2)(k_1-k_2), &\quad e_3(k_{2}) = \omega_{23}(e_2)(k_2-k_3),
\\\label{ClassThmDiagonSpacelikekCod1Case121d}
e_1(k_{3}) = \omega_{13}(e_3)(k_1-k_3), &\quad e_2(k_3) = \omega_{23}(e_3)(k_2-k_3), 
\end{eqnarray}
\end{subequations}
for $i,j,k=1,2,3$.
\end{lemma}

Now, we would like to give the following examples of GCR hypersurfaces with vanishing 
Gauss-Kronecker curvature lying in the time-like cone:

\begin{example} \label{GCRE14timelikeorn1}
Let $M = \mathbb S^2(\frac{1}{c^2}) \times \mathbb E^1$ in Minkowski spaces $\mathbb E^4_1$ be a hypercylinder parametrized as 
$$x(s,t,u)=\Big(x_1(s,t),x_2(s,t),x_3(s,t),u\Big).$$
Further, its normal is $N(s,t)=\Big(x_1(s,t),x_2(s,t),$ $x_3(s,t),0\Big)$. Therefore, \linebreak
$\displaystyle x = u \frac{\partial}{\partial u}+ N$. Note that, the tangent vector $\displaystyle%
\frac{\partial}{\partial u}$ is the principal direction corresponding with the principal curvature $k_1=0$.
Consequently, the hypercylinder $M$ with vanishing Gauss-Kronecker curvature is a GCR hypersurface.
\end{example}

\begin{example} \label{GCRE14timelikeorn2}
Let $\beta$ be a unit speed curve lying on $\mathbb H^3_1(-1) \subset \mathbb E^4_1$ 
and $V_1(u),V_2(u)$ two orthonormal vector fields spanning the normal space of $\beta$, i.e., 
\begin{eqnarray}
\left\langle V_1,V_2\right\rangle=\left\langle V_1,\beta \right\rangle=\left\langle V_1,\beta^{\prime}\right\rangle=\left\langle V_2,\beta \right\rangle=\left\langle V_2, \beta^{\prime}\right\rangle=0, \\
\left\langle V_1,V_1\right\rangle=\left\langle V_2,V_2\right\rangle=1.
\end{eqnarray}
Consider the hypersurface in $\mathbb E^4_1$ given by 
\begin{equation} \label{GCRE14timelikex1}
x(s,t,u)=s \beta(u)-c \Big(V_1(u) \cosh \left(\frac{t}{c}\right)+V_2(u) \sinh \left(\frac{t}{c}\right)\Big)
\end{equation}
for a non-constant $c$. Then, by a direct computation, one can check that the unit normal vector
field of $M$ is $N= V_1(u) \cosh \left(\frac{t}{c}\right)+V_2(u) \sinh \left(\frac{t}{c}\right)$
 and principal directions of $M$ are $e_1= \partial_s =\beta(u), \quad e_2= \partial_t,%
 \quad e_3= \frac{1}{\left\|x_u\right\|} \partial_u $ corresponding to principal curvatures $0,\frac{1}{c}, k_3$, respectively.
 Thus, the hypersurface $M$ is a GCR with vanishing Gauss-Kronecker curvature because of the first principal curvature $k_1=0$.
\end{example}

\begin{example} \label{GCRE14timelikeorn3}
Let $r : \Omega \longrightarrow \mathbb H^3_1(-1) \subset \mathbb E^4_1$
 be an oriented regular surface with the spherical normal $N$, where 
$\Omega$ is an open subset in $\mathbb R^2$. Consider the hypersurface $M$ in $\mathbb E^4_1$ given by
\begin{eqnarray} 
\nonumber x: I \times \Omega \longrightarrow \mathbb E^4_1, \\
\label{GCRE14timelikex2} x(s,t,u)=s r(t,u)-c N(t,u)
\end{eqnarray}
where $c$ is a constant. Here, since the vector field $N$ is the spherical normal of the surface $r$, 
$$\left\langle r_t,N\right\rangle = \left\langle r_s,N\right\rangle = \left\langle r,N\right\rangle = 0$$
are satisfied. Thus, by considering the above equalities in \eqref{GCRE14timelikex2}, we get directly 
$$\left\langle x_t,N\right\rangle = \left\langle x_s,N\right\rangle = \left\langle x_u,N\right\rangle = 0.$$
 So, one can conclude that the vector field $N$ is also the unit normal of the hypersurface of $M$.
 Furthermore, one get $x_{ss}=0$ from \eqref{GCRE14timelikex2} and also
$\left\langle x_{st},N\right\rangle =0$ and $\left\langle x_{tt},N\right\rangle =0$. 
 So, $h(\partial_s,X)=0$ satisfied for all tangent vector $X$ on $M$ which say that $S(\partial_s)=0$.
 Therefore, the Gauss-Kronecker curvature of the hypersurfece $M$ vanishes and 
$\partial_s=r(t,u)$ is a principal direction of the hypersurface. Consequently, $M$ is a GCR hypersurface.
\end{example}

Finally, we want to give our second main theorem that his proof is completely similar to 
the proof of the first main theorem as Theorem \ref{GCRE14Spacelikekoni} given in the previos subsection:

\begin{theorem}
Let $M$ be a space-like hypersurface with vanishing Gauss-Kronecker curvature
 in the Minkowski space $\mathbb E^4_1$. Then, $M$ is a GCR hypersurface lying in the time-like cone
 if and only if it is congruent to one of the following 3 types of hypersurfaces.
\begin{enumerate}
\item [(i)] A part of the hypercylinder given in Example \ref{GCRE14timelikeorn1},

\item [(ii)] A hypersurface parametrized with \eqref{GCRE14timelikex1} in Example \ref{GCRE14timelikeorn2},

\item [(iii)] A hypersurface parametrized with \eqref{GCRE14timelikex2} in Example \ref{GCRE14timelikeorn3}.
\end{enumerate}
\end{theorem}


\section*{Acknowledgements}

This paper is a part of PhD thesis of the first named author who was supported by The Scientific and
Technological Research Council of Turkey (TUBITAK) as a PhD scholar.


\begin{thebibliography}{99}
\bibitem{Boyadzhiev2007} K. N. Boyadzhiev, \textit{Equiangular surfaces,
self-similar surfaces, and the geometry of seashells}, Coll. Math. J., 38, no. 4, 265--271 (2007).

\bibitem{ChenCRSurf2001} B.-Y. Chen, \textit{Constant-ratio hypersurfaces}%
, Soochow J. Math., 27, no. 4, 353--362 (2001).

\bibitem{ChenTNsub} B.-Y. Chen, \textit{Geometry of position
functions of Riemannian submanifolds in pseudo-Euclidean space}, Journal of Geometry, \textbf{74}, 61-77 (2002).

\bibitem{ChenPRGeom2001} B.-Y. Chen, \textit{Pseudo-Riemannian Geometry, $%
\delta$-Invariants and Applications}, World Scientific, Hackensack, NJ, 2011.

\bibitem{YuFu2014GCRS} Y. Fu  and M. I. Munteanu, \emph{Generalized
constant ratio surfaces in $\mathbb{E}^3$}, Bull. Braz. Math. Soc., New
Series 45, 73--90 (2014).

\bibitem{YuFu2016LGCR} Y. Fu  and D. Yang, \emph{On Lorentz GCR surfaces in Minkowski 3-space}, Bull. Korean Math. Soc. 53, no. 1, pp. 227--245 (2016).  


\bibitem{YufuYangspacelikeslope} Y. Fu, D. Yang, \emph{On constant slope
spacelike surfaces in 3-dimensional Minkowski space }Journal of Math. Anal.
and Appl., 385, 208--220 (2012).

\bibitem{YufuWangtimelikeslope} Y. Fu, X. Wang, \emph{Classification of
Timelike Constant Slope Surfaces in 3-dimensional Minkowski Space }Results
Math. 63, 1095--1108 (2013).

\bibitem{MagidLorIsop}
M. A. Magid, \emph{Lorentzian isoparametric hypersurfaces},  Pacific J. Math., {118}, 165--197 (1985).



\bibitem{Munteanuconstantslope} M. I. Munteanu, \emph{From Golden Spirals
To Constant Slope Surfaces } Journal of Mathematical Physics 51,
art. 073507 (2010).


\bibitem{MunteanuNistor2011} M. I. Munteanu  and A. I. Nistor \textit{%
Complete classification of surfaces with a canonical principal direction in
the Euclidean space $\mathbb{E}^3$.} Cent. Eur. J. Math., 9(2),
378--389 (2011).



\bibitem{Nistor2013}  A. I. Nistor \textit{A note on spacelike surfaces in Minkowski 3-space} Filomat, 27(5),
843--849 (2013).



\bibitem{Nomizuiso} K. Nomizu, \emph{On isoparametric
hypersurfaces in the Lorentzian space forms},  Japan J. Math., vol. 7, No. 1, (1981).


\bibitem{TurgayGCRHypEucl}  N. C. Turgay, \emph{Generalized constant ratio
hypersurfaces in Euclidean spaces} (submitted) see arXiv:1504.07757.



\end{thebibliography}

\end{document}